 \newtheorem{thm}{Theorem}[section]
 \newtheorem{cor}[thm]{Corollary}
 \newtheorem{lem}[thm]{Lemma}
 \newtheorem{prop}[thm]{Proposition}
 \theoremstyle{definition}
 \theoremstyle{remark}
 \numberwithin{equation}{section}
\begin{document}
%
%
%
%
%
%
%
%
%
\title[Slant Null Curves on Some 3-Manifolds]
 {Slant Null Curves on Normal Almost 
 Contact B-Metric 3-Manifolds with parallel Reeb vector field}

\author[G. Nakova]{Galia Nakova}

\address{%
University of Veliko Tarnovo "St. Cyril and St. Methodius" \\ Faculty of Mathematics and Informatics\\   Department of Algebra and Geometry
\\ 2 Teodosii Tarnovski Str. \\ Veliko Tarnovo 5003\\ Bulgaria}
\email{gnakova@gmail.com}

\author[H. Manev]{Hristo Manev$^{1,2}$}
\address[1]{Medical University of Plovdiv\\ Faculty of Pharmacy\\
Section of Mathematics and IT\\ 15-A Vasil Aprilov Blvd.\\   Plovdiv 4002\\   Bulgaria}
\address[2]{Paisii Hilendarski University of Plovdiv\\   Faculty of Mathematics and
Informatics\\   Department of Algebra and Geometry\\   236
Bulgaria Blvd.\\   Plovdiv 4027\\   Bulgaria}
\email{hmanev@uni-plovdiv.bg}

\thanks{The second author was partially supported by projects NI15-FMI-004 and  MU15-FMIIT-008
of the Scientific Research Fund at the University of Plovdiv.}

\subjclass{53C15, 53C50}

\keywords{Almost contact B-metric manifolds, Slant curves, Null curves, Cartan framed null curves}


\begin{abstract}
In this paper we study slant null curves with respect to the original parameter on 3-dimensional normal almost contact B-metric manifolds with parallel
Reeb vector field. We prove that for non-geodesic such curves there exists a unique Frenet frame for which the original parameter is distinguished. Moreover, we
obtain a necessary condition this Frenet frame to be a Cartan Frenet frame with respect to the original parameter. Examples of the considered curves are constructed.
\end{abstract}

\newcommand{\ie}{i.\,e. }
\newcommand{\g}{\mathfrak{g}}
\newcommand{\D}{\mathcal{D}}
\newcommand{\F}{\mathcal{F}}
\newcommand{\diag}{\mathrm{diag}}
\newcommand{\End}{\mathrm{End}}
\newcommand{\im}{\mathrm{Im}}
\newcommand{\id}{\mathrm{id}}
\newcommand{\Hom}{\mathrm{Hom}}

\newcommand{\Rad}{\mathrm{Rad}}
\newcommand{\rank}{\mathrm{rank}}
\newcommand{\const}{\mathrm{const}}
\newcommand{\tr}{{\rm tr}}
\newcommand{\ltr}{\mathrm{ltr}}
\newcommand{\codim}{\mathrm{codim}}
\newcommand{\Ker}{\mathrm{Ker}}
\newcommand{\R}{\mathbb{R}}

\newcommand{\thmref}[1]{Theorem~\ref{#1}}
\newcommand{\propref}[1]{Proposition~\ref{#1}}
\newcommand{\corref}[1]{Corollary~\ref{#1}}
\newcommand{\secref}[1]{\S\ref{#1}}
\newcommand{\lemref}[1]{Lemma~\ref{#1}}
\newcommand{\dfnref}[1]{Definition~\ref{#1}}


\newcommand{\ee}{\end{equation}}
\newcommand{\be}[1]{\begin{equation}\label{#1}}

\maketitle

\section{Introduction}\label{sec-1}
Many of the results in the classical differential geometry of curves have analogues in the Lorentzian geometry.  Since null curves have very different properties compared to
space-like and time-like curves, we have special interest in studying the geometry of null curves. The general theory of null curves is developed in \cite{D-B, D-J},
where there are established important applications of these curves in general relativity.

Let $\bf{F}$ be a Frenet frame along a null curve $C$ on a Lorentzian manifold. According to \cite{D-B}, $\bf{F}$ and the Frenet equations with respect to $\bf{F}$ depend on both the parametrization of $C$ and the choice of a screen vector bundle. However, if a non-geodesic
null curve $C$ is  properly parameterized, then there exists only one Frenet frame, called a Cartan Frenet frame, for which
the corresponding Frenet equations of $C$, called Cartan Frenet equations, have minimum number of curvature functions (\cite{D-J}).

In this paper we consider 3-dimensional almost contact B-metric manifolds $(M,\varphi,\allowbreak{}\xi,\eta,g)$, which are Lorentzian manifolds equipped with an almost contact B-metric structure. We study slant null curves on considered manifolds belonging to the class $\F_1$ of the Ganchev-Mihova-Gribachev classification given in \cite{GaMGri}. Also, the special class $\F_0$, which is subclass of $\F_1$, is object of considerations.

A slant curve $C(t)$ on $(M,\varphi,\xi,\eta,g)$, defined by the
condition $g(\dot C(t),\xi )={\rm const}$ for the tangent vector $\dot C(t)$, is a natural generalization of a cylindrical helix in an Euclidean space. Let us remark that if we change the parameter $t$ of a slant curve $C(t)$ with another parameter $p$, then we have $\dot C(p)=\dot C(t)\frac{{\rm d}t}{{\rm d}p}$. Hence
$g(\dot C(p),\xi )$ is a
constant if and only if $t=ap+b$, where $a, \, b$ are real numbers. This means that a slant null curve $C(t)$ is not slant with respect to a special parameter $p$ in
general. Motivated by this fact, our aim in the present work is to study slant null curves with respect to the original parameter. In \cite{W} there are considered non-geodesic slant null curves with respect to a special parameter $p$ on 3-dimensional normal almost paracontact metric manifolds.

The paper is organized as follows.
Section 2 is a brief review of almost contact B-metric manifolds and geometry of null curves on a 3-dimensional Lorentzian manifold.
The main results are presented
in Section 3. First, in \propref{Proposition 3.1} we express a general Frenet frame ${\bf F}$ and the functions $h$ and $k_1$ along a slant null curve $C(t)$ on $(M,\varphi,\xi,\eta,g)$ in terms of the almost contact B-metric structure. In \thmref{Theorem 3.1} we prove that for a non-geodesic slant null curve $C(t)$ on a 3-dimensional $\F_1$-manifold there exists a unique Frenet frame ${\bf F}_1$ for which the original parameter $t$ is distinguished. Let us remark that in \cite{H-I} it is shown that for any non-geodesic null curve on a 3-dimensional Minkowski space there exists a unique Frenet frame for which the original parameter is distinguished. In \thmref{Theorem 3.2} we give a necessary
condition for a slant null curve $\left(C(t), {\bf F}_1\right)$ to be a non-geodesic Cartan framed null curve with respect to the original parameter $t$.
The last Section 4 is devoted to some examples of the investigated curves.

\section{Preliminaries}\label{sec-2}

Let $(M,\varphi,\xi,\eta,g)$ be an almost contact manifold with B-metric or an {\it almost contact B-metric manifold}. This means that $M$ is a $(2n+1)$-dimensional differentiable manifold, $(\varphi,\xi,\eta)$ is an almost contact structure consisting of an endomorphism $\varphi $ of the tangent bundle, a Reeb vector field $\xi $ and its dual contact 1-form $\eta $, \ie the following relations are satisfied (\cite{GaMGri}):
\begin{equation*}
\varphi^2X=-\id+\eta\otimes \xi, \qquad \quad \eta(\xi)=1,
\end{equation*}
where $\id$ denotes the identity; as well as $g$ is a pseudo-Riemann\-ian metric, called a \emph{B-metric},
such that (\cite{GaMGri})
\begin{equation*}
g(\varphi X,\varphi Y)=-g(X,Y)+\eta(X)\eta(Y).
\end{equation*}
Here and further $X$, $Y$ and $Z$ are tangent vector fields on $M$, \ie $X, Y \in \Gamma (TM)$.
Immediate consequences of the above conditions are:
\begin{equation}\label{1'}
\eta \circ \varphi =0, \quad \varphi \xi =0, \quad {\rm rank}(\varphi)=2n, \quad \eta (X)=
g(X,\xi ), \quad g(\xi,\xi )=1.
\end{equation}
The tensor ${\widetilde g}$ given by ${\widetilde g}(X,Y)=g(X,\varphi Y)+\eta (X)\eta (Y)$ is a B-metric, too. Both metrics $g$ and
${\widetilde g}$ are of signature $(n+1,n)$.

Let $\nabla$ be the Levi-Civita connection of $g$. The tensor field $F$ of type $(0,3)$ on $M$ is defined by
$F(X,Y,Z)=g((\nabla_X\varphi)Y,Z)$ 
and it has the following properties:
\[
F(X,Y,Z)=F(X,Z,Y)=F(X,\varphi Y,\varphi Z)+\eta (Y)F(X,\xi,Z)+\eta (Z)F(X,Y,\xi ).
\]
Moreover, we have
\begin{equation}\label{2.1}
F(X,\varphi Y,\xi )=(\nabla _X\eta )Y=g(\nabla _X\xi,Y).
\end{equation}
The following 1-forms, called \emph{Lee forms}, are associated with $F$:
\[
\theta (X)=g^{ij}F(e_i,e_j,X), \quad \theta ^*(X)=g^{ij}F(e_i,\varphi e_j,X), \quad
\omega (X)=F(\xi,\xi,X),
\]
where $\{e_i,\xi \}, \, i=\{1,\ldots,2n\}$ is a basis of $T_xM$, $x\in M$, and $(g^{ij})$ is the inverse matrix of $(g_{ij})$.

A classification of the almost contact B-metric manifolds with respect to $F$ is given in \cite{GaMGri} and
eleven basic classes  $\F_i$ $(i=1,2,\dots,11)$ are obtained. If $(M,\varphi,\xi,\eta,g)$ belongs to $\F_i$ then it is called
an \emph{$\F_i$-manifold}.

The special class $\F_0$ is the intersection of all basic classes. It is known as the class of the {\it cosymplectic B-metric manifolds}, \ie the class of the considered manifolds with parallel structure tensors with respect to $\nabla$, namely
$\nabla \varphi =\nabla \xi =\nabla \eta =\nabla g=\nabla {\widetilde g}=0$ and consequently $F=0$.

The lowest dimension of the considered manifolds is dimension three. In \cite{HM1} it is established that the class of 3-dimensional almost contact B-metric manifolds is
$\F_1\oplus \F_4\oplus \F_5\oplus \F_8\oplus \F_9\oplus \F_{10}\oplus \F_{11}$.
According to \cite{ManIv13}, the class of the normal almost contact B-metric manifolds is $\F_1\oplus\F_2\oplus\F_4\oplus\F_5\oplus\F_6$, since the Nijenhuis tensor of almost contact structure vanishes there. Therefore, the class of the 3-dimensional normal almost contact B-metric manifolds is $\F_1\oplus\F_4\oplus\F_5$.

In this paper we consider 3-dimensional almost contact B-metric manifolds belonging to $\F_1$, determined by (\cite{GaMGri})
\begin{equation}\label{2.2}
\begin{array}{ll}
\F_1 : F(X,Y,Z)=\frac{1}{2}\left\{g(X,\varphi Y)\theta (\varphi Z)+g(\varphi X,\varphi Y)\theta (\varphi ^2Z)\right. \\
\phantom{\F_1 : F(X,Y,Z)=\frac{1}{2}\ }
\left. +g(X,\varphi Z)\theta (\varphi Y)+g(\varphi X,\varphi Z)\theta (\varphi ^2Y)\right\}.
\end{array}
\end{equation}

Taking into account \eqref{2.1} and \eqref{2.2}, for $\F_1$-manifolds we have that
\begin{equation*}\label{2.2'}
  \nabla\xi=0 .
\end{equation*}
Let us remark that $\nabla\xi\neq 0$ for the rest 3-dimensional normal almost contact B-metric manifolds.
Therefore, we can determine $\F_1$ as the class of 3-dimension\-al normal almost contact B-metric manifolds with parallel Reeb vector field.
Obviously, $\F_0$-manifolds are $\F_1$-manifolds with vanishing Lee forms, \ie $\F_0$ is the subclass of $\F_1$ of the so-called \emph{balanced manifolds} of the considered type.

Let us remark that on a 3-dimensional $(M,\varphi,\xi,\eta,g)$ the metric $g$ has signature $(2,1)$, \ie $(M,g)$ is a 3-dimensional Lorentzian manifold.

Let 
$C: I\longrightarrow M$ be a smooth curve on $M$ given locally by
\[
x_i=x_i(t), \quad t\in I\subseteq {\R}, \quad i\in \{1,2,3\}
\]
for a coordinate neighborhood $U$ of $C$. The tangent vector field is given by
\[
\frac{{\rm d}}{{\rm d}t}=(\dot {x}_1, \dot {x}_2, \dot {x}_3)=\dot {C},
\]
where we denote $\frac{{\rm d}x_i}{{\rm d}t}$ by $\dot {x}_i$ for $i\in \{1,2,3\}$. The curve $C$ is called a {\it regular curve} if $\dot {C}\neq 0$ holds everywhere.

Let a regular curve $C$ be a null (lightlike) curve on $(M, g)$, \ie at each point $x$ of $C$ we have
\begin{equation}\label{2'}
g(\dot {C},\dot {C})=0,\qquad \dot {C}\neq 0.
\end{equation}
%
A general Frenet frame on $M$ along $C$ is denoted by ${\bf F}=\{\dot {C}, N, W\}$ and determined by
\begin{equation}\label{3'}
g(\dot {C},N)=g(W,W)=1, \quad g(N,N)=g(N,W)=g(\dot {C},W)=0.
\end{equation}
The following general Frenet equations with respect to ${\bf F}$ and $\nabla $ of $(M, g)$ are known from \cite{D-J} %
\begin{equation}\label{general Frenet eq}
\begin{array}{lll}
\nabla _{\dot {C}}\dot {C}=h\dot {C}+k_1W, \\
\nabla _{\dot {C}}N=-hN+k_2W, \\
\nabla _{\dot {C}}W=-k_2\dot {C}-k_1N,
\end{array}
\end{equation}
where 
$h$, $k_1$ and  $k_2$
are smooth functions on $U$. The functions $k_1$ and  $k_2$ are called {\it curvature functions} of $C$.

The general Frenet frame ${\bf F}$ and its general Frenet equations \eqref{general Frenet eq} are not unique as they depend on the parameter and the choice of the
screen vector bundle of $C$ (for details see \cite[pp. 56-58]{D-B}, \cite[pp. 25-29]{D-J}). It is known \cite[p. 58]{D-B} that there exists a
parameter $p$ called a {\it distinguished parameter}, for which the function $h$ vanishes in \eqref{general Frenet eq}. The pair $(C(p), {\bf F})$, where ${\bf F}$ is a Frenet frame along $C$ with respect to a distinguished parameter $p$, is called a {\it framed null curve} (see \cite{D-J}). In general, $(C(p), {\bf F})$ is not unique since it depends on both $p$ and the screen distribution. Therefore we look for a Frenet frame with the minimum number of curvature functions
which are invariant under Lorentzian transformations. Such frame is called {\it Cartan Frenet frame} of a null curve $C$. In \cite{D-J} it is proved that if the null curve $C(p)$ is non-geodesic
such that the following condition for $\ddot{C}=\frac{{\rm d}}{{\rm d}p}\dot{C}$ holds
\begin{equation*}\label{k1}
g(\ddot {C},\ddot {C})=k_1=1,
\end{equation*}
then there exists only one Cartan Frenet frame
${\bf F}$ 
with the following Frenet equations
\begin{equation}\label{Cartan Frenet eq}
\begin{array}{lll}
\nabla _{\dot {C}}\dot {C}=W, \\
\nabla _{\dot {C}}N=\tau W, \\
\nabla _{\dot {C}}W=-\tau \dot {C}-N.
\end{array}
\end{equation}
The latter equations are called the {\it Cartan Frenet equations} of $C(p)$ whereas $\tau $ is called a \emph{torsion function} and it is invariant upto a sign under Lorentzian transformations. A null curve together with its Cartan Frenet frame is called a {\it Cartan framed null curve}.

Let us consider a smooth curve $C$ 
on an almost contact B-metric manifold $(M,\varphi,\xi,\allowbreak{}\eta,g)$. We say
that $C$ is a {\it slant curve} on $M$ if
\begin{equation}\label{4'}
g(\dot {C},\xi )=\eta (\dot {C})=a
\end{equation}
and $a$ is a real constant. The curve $C$ is called a {\it Legendre curve} if $a=0$.

For the sake of brevity, let us use the following denotation
\begin{equation}\label{b}
g(\dot {C},\varphi \dot {C})=b,
\end{equation}
where $b$ is a smooth function on $C$.

\section{Cartan framed slant null curves with respect to the original parameter on 3-dimensional
${\F}_1$-manifolds}\label{sec-3}

\begin{lem}\label{Lemma 3.1}
Let $C$ be a null curve on a 3-dimensional almost contact B-metric manifold $(M,\varphi,\xi,\eta,g)$. Then the triad of vector fields
$\{\dot {C}, \xi, \varphi \dot {C}\}$ is a basis of $T_xM$ at $x\in C$.
\end{lem}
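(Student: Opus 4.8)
The plan is to prove the statement by showing that the three vectors $\dot C$, $\xi$, $\varphi\dot C$ are linearly independent; since $\dim_{\R}T_xM=3$, independence already forces them to be a basis. Note that no slant or class assumption is needed, so I would work with an arbitrary null curve. The most systematic route is to form the Gram matrix $G$ of $\{\dot C,\xi,\varphi\dot C\}$ relative to $g$ and verify that $\det G\neq 0$ at every point of $C$, because a nonzero Gram determinant is equivalent to linear independence.

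First I would compute the six independent entries of $G$ purely from the algebraic identities of the structure, with no differentiation involved. The null condition \eqref{2'} gives $g(\dot C,\dot C)=0$; the relations \eqref{1'} give $g(\dot C,\xi)=\eta(\dot C)$, $g(\xi,\xi)=1$, and $g(\xi,\varphi\dot C)=\eta(\varphi\dot C)=0$ via $\eta\circ\varphi=0$; the B-metric compatibility yields $g(\varphi\dot C,\varphi\dot C)=-g(\dot C,\dot C)+\eta(\dot C)^2=\eta(\dot C)^2$; and the last entry is the abbreviation $g(\dot C,\varphi\dot C)=b$ from \eqref{b}. Ordering the basis as $(\dot C,\xi,\varphi\dot C)$ and expanding the resulting $3\times 3$ determinant, I expect the short computation to give $\det G=-\bigl(\eta(\dot C)^4+b^2\bigr)$.

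The main obstacle is then to confirm that this determinant is genuinely nonzero, since a priori it could vanish in the single degenerate case $\eta(\dot C)=0$ and $b=0$ occurring simultaneously. To exclude this, I would use the nondegeneracy of $g$ on the contact distribution. Because $g(\xi,\xi)=1$, the vector $\xi$ is non-null, so $T_xM=\R\xi\oplus\D$ orthogonally with $\D=\xi^{\perp}=\ker\eta$; since $g$ has signature $(2,1)$, its restriction to the plane $\D$ is nondegenerate of signature $(1,1)$. Suppose, for contradiction, that $\eta(\dot C)=0$ and $b=0$. Then $\dot C\in\D$, and from $\varphi^2\dot C=-\dot C+\eta(\dot C)\xi=-\dot C$ one sees that $\varphi\dot C$ cannot be a scalar multiple $c\dot C$, since that would force $c^2\dot C=\varphi^2\dot C=-\dot C$ and hence $c^2=-1$; thus $\{\dot C,\varphi\dot C\}$ is a basis of $\D$. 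But the three products $g(\dot C,\dot C)=0$, $g(\dot C,\varphi\dot C)=b=0$, and $g(\varphi\dot C,\varphi\dot C)=\eta(\dot C)^2=0$ would then make $g|_{\D}$ identically zero, contradicting its nondegeneracy. Therefore $\eta(\dot C)^4+b^2>0$, so $\det G<0$, the vectors $\dot C$, $\xi$, $\varphi\dot C$ are linearly independent, and they constitute a basis of $T_xM$.
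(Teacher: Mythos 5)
Your proof is correct, but it takes a genuinely different route from the paper. The paper argues directly: $\xi$ (space-like) and $\dot C$ (null) are independent, and assuming $\varphi\dot C=u\xi+v\dot C$ and applying $\varphi$ forces $v^2=-1$, a contradiction. You instead compute the Gram matrix of $(\dot C,\xi,\varphi\dot C)$, whose entries follow from \eqref{1'}, \eqref{2'}, \eqref{b} and the B-metric compatibility, and correctly obtain $\det G=-\left(\eta(\dot C)^4+b^2\right)$; you then exclude the degenerate case $\eta(\dot C)=b=0$ via the orthogonal splitting $T_xM=\R\xi\oplus\D$, $\D=\xi^{\perp}=\ker\eta$, on which $g$ restricts nondegenerately with signature $(1,1)$ --- amusingly, your exclusion argument ($\varphi\dot C=c\dot C$ forces $c^2=-1$) is exactly the paper's trick, deployed inside $\D$. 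Your approach is longer but buys something the paper leaves implicit: the strict inequality $\eta(\dot C)^4+b^2>0$ along \emph{any} null curve, which is precisely what justifies the denominators $\sqrt{a^4+b^2}$ in \eqref{3.2} and \eqref{3.33} of \propref{Proposition 3.1}, so your computation doubles as a verification the paper never spells out. One small caution: your side remark that a nonzero Gram determinant is \emph{equivalent} to linear independence is an overstatement in indefinite signature (independent vectors can have vanishing Gram determinant, e.g.\ a null vector together with an orthogonal space-like one); fortunately you only use the valid direction, namely that $\det G\neq 0$ implies independence, so there is no gap.
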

\begin{proof}
At each point $x$ of $C$ we have \eqref{1'} and \eqref{2'}, \ie
$\xi $ is space-like and
$\dot {C}$ is lightlike. Hence $\xi$ and $\dot {C}$
are linearly independent vector fields along $C$. If we assume that $\varphi \dot {C}$ belongs to the plane $\alpha ={\rm span}\{\xi, \dot {C}\}$, we have
$\varphi \dot {C}=u\xi +v\dot {C}$ for some functions $u$ and $v$. Applying $\varphi$ to the both sides of this equality, we obtain $-\dot {C}+\eta (\dot {C})\xi =
uv\xi +v^2\dot {C}$ which implies $v^2=-1$. Thus, $\{\dot {C}, \xi, \varphi \dot {C}\}$ are linearly independent vector fields along $C$ which confirms our assertion.
\end{proof}

\begin{prop}\label{Proposition 3.1}
Let $C$ be a slant null curve with conditions \eqref{4'} and \eqref{b} on
$(M,\varphi,\xi,\eta,g)$, $\dim M=3$.
If ${\bf F}=\{\dot {C}, N, W\}$ is a general Frenet frame on $M$ along $C$ which has the same positive orientation as a basis $\{\dot {C}, \xi, \varphi \dot {C}\}$ at
each $x\in C$, then
\begin{equation}\label{3.1}
W=\alpha \xi +\beta \dot {C}+\gamma \varphi \dot {C} , \qquad
N=\lambda \xi+\mu \dot {C}+\nu \varphi \dot {C},
\end{equation}
where $\beta $ is an arbitrary function and $\alpha, \gamma, \lambda, \mu, \nu $ are the following functions
\begin{equation}\label{3.2}
\begin{split}
\alpha ={-\frac{b}{\sqrt{a^4+b^2}}},\quad
\gamma ={\frac{a}{\sqrt{a^4+b^2}}},
\end{split}
\end{equation}
\begin{equation}\label{3.33}
\begin{split}
\lambda &=\frac{a^3+\beta b\sqrt{a^4+b^2}}
{a^4+b^2},
\\[4pt]
\mu&=-\frac{a^2+\beta ^2\left(a^4+b^2\right)}{2\left(a^4+b^2\right)},
\\[4pt]
\nu &=\frac{b-\beta a
\sqrt{a^4+b^2}}{a^4+b^2}.
\end{split}
\end{equation}
Moreover, the functions $h$ and $k_1$ 
with respect to ${\bf F}$ are given by
\begin{equation}\label{3.4}
\begin{array}{ll}
h=-\lambda g(\dot {C},\nabla _{\dot {C}}\xi )+\frac{\nu }{2}\left[\dot {C}\left(b\right)-F(\dot {C},\dot {C},\dot {C})\right],\\ \\
k_1=\alpha g(\dot {C},\nabla _{\dot {C}}\xi )+\frac{\gamma }{2}\left[\dot {C}\left(b\right)-F(\dot {C},\dot {C},\dot {C})\right].
\end{array}
\end{equation}
\end{prop}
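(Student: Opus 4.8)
The plan is to perform every computation in the basis $\{\dot{C}, \xi, \varphi \dot{C}\}$ supplied by \lemref{Lemma 3.1}, converting each normalisation in \eqref{3'} into an algebraic condition on the coefficients in \eqref{3.1}. First I would assemble the Gram matrix of the basis. From \eqref{2'} we have $g(\dot{C}, \dot{C}) = 0$, from \eqref{4'} we have $g(\dot{C}, \xi) = a$, and \eqref{b} records $g(\dot{C}, \varphi \dot{C}) = b$. The identities \eqref{1'} give $g(\xi, \xi) = 1$ and $g(\xi, \varphi \dot{C}) = \eta(\varphi \dot{C}) = 0$, while the B-metric relation yields $g(\varphi \dot{C}, \varphi \dot{C}) = -g(\dot{C}, \dot{C}) + \eta(\dot{C})^2 = a^2$. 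Every inner product used below is then read directly off this $3 \times 3$ matrix.

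Determining $W = \alpha \xi + \beta \dot{C} + \gamma \varphi \dot{C}$ comes next. The condition $g(\dot{C}, W) = 0$ reduces to the linear relation $a\alpha + b\gamma = 0$, and because this exact combination carries the mixed terms, $g(W, W) = 1$ collapses to $\alpha^2 + a^2\gamma^2 = 1$; these two equations fix $\alpha, \gamma$ up to a common sign, which the positive-orientation hypothesis on $\{\dot{C}, N, W\}$ pins down to give \eqref{3.2}, while $\beta$ cancels out and stays free. For $N = \lambda \xi + \mu \dot{C} + \nu \varphi \dot{C}$ the relation $g(\dot{C}, N) = 1$ reads $a\lambda + b\nu = 1$, and in $g(N, W) = 0$ the coefficient multiplying $\mu$ is precisely $a\alpha + b\gamma = 0$, so that equation becomes a second linear relation in $\lambda, \nu$ alone; solving the pair gives $\lambda$ and $\nu$ as in \eqref{3.33}. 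Finally $g(N, N) = 0$ contains $\mu$ only through the term $2\mu(a\lambda + b\nu) = 2\mu$, whence $\mu = -\tfrac{1}{2}(\lambda^2 + a^2\nu^2)$, which simplifies to the stated expression.

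It remains to read off $h$ and $k_1$ from the first line of \eqref{general Frenet eq}. Pairing $\nabla_{\dot{C}}\dot{C} = h\dot{C} + k_1 W$ with $N$ and with $W$ and using \eqref{3'} gives $h = g(\nabla_{\dot{C}}\dot{C}, N)$ and $k_1 = g(\nabla_{\dot{C}}\dot{C}, W)$, so it suffices to evaluate $g(\nabla_{\dot{C}}\dot{C}, \cdot)$ on each basis vector and substitute the coefficients \eqref{3.2}--\eqref{3.33}. Differentiating $g(\dot{C}, \dot{C}) = 0$ gives $g(\nabla_{\dot{C}}\dot{C}, \dot{C}) = 0$, and differentiating $g(\dot{C}, \xi) = a$ gives $g(\nabla_{\dot{C}}\dot{C}, \xi) = -g(\dot{C}, \nabla_{\dot{C}}\xi)$. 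The term $g(\nabla_{\dot{C}}\dot{C}, \varphi \dot{C})$ is the one requiring care: here I would differentiate $g(\dot{C}, \varphi \dot{C}) = b$, expand $\nabla_{\dot{C}}(\varphi \dot{C}) = (\nabla_{\dot{C}}\varphi)\dot{C} + \varphi \nabla_{\dot{C}}\dot{C}$, identify $g(\dot{C}, (\nabla_{\dot{C}}\varphi)\dot{C}) = F(\dot{C}, \dot{C}, \dot{C})$, and invoke that $\varphi$ is self-adjoint for $g$ (which follows from the defining B-metric identity upon replacing $Y$ by $\varphi Y$ and using $\eta \circ \varphi = 0$) to collapse the remaining pair of terms, obtaining $g(\nabla_{\dot{C}}\dot{C}, \varphi \dot{C}) = \tfrac{1}{2}[\dot{C}(b) - F(\dot{C}, \dot{C}, \dot{C})]$. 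Substituting these three evaluations into the expansions of $N$ and $W$ produces \eqref{3.4}. I expect the self-adjointness identity together with the correct handling of $g(\nabla_{\dot{C}}\dot{C}, \varphi \dot{C})$ to be the only genuinely structural step; once it is in place, both the solution of the frame equations and the extraction of $h, k_1$ are routine linear algebra made short by the cancellations $a\alpha + b\gamma = 0$ and $a\lambda + b\nu = 1$.
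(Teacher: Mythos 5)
Your proposal is correct and takes essentially the same route as the paper's own proof: the same linear systems obtained from \eqref{3'} in the basis $\{\dot{C},\xi,\varphi\dot{C}\}$ (with the cancellations $a\alpha+b\gamma=0$ and $a\lambda+b\nu=1$), the same orientation argument fixing the common sign of $\alpha,\gamma$, and the same extraction of $h$ and $k_1$ from $g(\nabla_{\dot{C}}\dot{C},\dot{C})=0$, $g(\nabla_{\dot{C}}\dot{C},\xi)=-g(\dot{C},\nabla_{\dot{C}}\xi)$ and the identity \eqref{3.8} — with the small merit that you make explicit the self-adjointness $g(\varphi X,Y)=g(X,\varphi Y)$ that the paper uses only tacitly. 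One caveat: your substitution, exactly like the paper's own, actually yields $k_1=-\alpha\, g(\dot{C},\nabla_{\dot{C}}\xi)+\frac{\gamma}{2}\left[\dot{C}(b)-F(\dot{C},\dot{C},\dot{C})\right]$, so the plus sign before $\alpha$ in the stated \eqref{3.4} is a typo in the paper (harmless downstream, since $\nabla\xi=0$ on $\F_1$-manifolds) rather than something your derivation literally reproduces.
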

\begin{proof}
According to \lemref{Lemma 3.1}, we have $W=\alpha \xi +\beta \dot {C}+\gamma \varphi \dot {C}$ for some functions $\alpha, \beta, \gamma $. Using 
\eqref{3'}, we obtain the following system of equations for $\alpha, \beta, \gamma $
\begin{equation*}
\begin{array}{l}
a\alpha +\gamma b=0, \\
\alpha ^2+a^2\gamma ^2+2\beta \left(a\alpha +\gamma b\right)=1.
\end{array}
\end{equation*}
The solutions of the system denoting $\varepsilon=\pm 1$ are
\[
\gamma ={\frac{\varepsilon a}{\sqrt{a^4+b^2}}}, \qquad
\alpha =-{\frac{\varepsilon b}{\sqrt{a^4+b^2}}}
\]
and $\beta$ is an arbitrary function.

Let us express $N=\lambda \xi+\mu \dot {C}+\nu \varphi \dot {C}$ for some functions $\lambda, \mu, \nu$.
From \eqref{3'} 
we obtain the following system of equations for $\lambda, \mu, \nu$
\begin{equation*}
\begin{array}{lll}
a\lambda +\nu b=1, \\
\lambda \alpha +\beta +a^2\nu\gamma =0, \\
\lambda^2+a^2\nu^2+2\mu =0,
\end{array}
\end{equation*}
which yields the following solution 
\begin{equation}\label{3.5}
\lambda =\frac{1-\nu b}{a},\qquad 
\nu =-\frac{\alpha +a\beta }{a^3\gamma -\alpha b},\qquad 
\mu =-\frac{1}{2}\left(\lambda ^2+a^2\nu ^2\right).
\end{equation}
The frame ${\bf F}$ is positive oriented if $\gamma ={\frac{a}{\sqrt{a^4+b^2}}}$ holds. This  implies
\[
\alpha ={-\frac{b}{\sqrt{a^4+b^2}}}.
\]
Then we obtain \eqref{3.33} by direct substitutions.
Now, from \eqref{general Frenet eq} and \eqref{3.1} we have
\begin{equation}\label{3.6}
\begin{split}
h&=g\left(\nabla _{\dot {C}}\dot {C},N\right)\\[4pt]
&=\lambda g\left(\nabla _{\dot {C}}\dot {C},\xi \right)+\mu g\left(\nabla _{\dot {C}}\dot {C},\dot {C}\right)+\nu
g\left(\nabla _{\dot {C}}\dot {C},\varphi \dot {C}\right), \\[4pt]
k_1&=g\left(\nabla _{\dot {C}}\dot {C},W\right)\\[4pt]
&=\alpha g\left(\nabla _{\dot {C}}\dot {C},\xi \right)+\beta g\left(\nabla _{\dot {C}}\dot {C},\dot {C}\right)+\gamma
g\left(\nabla _{\dot {C}}\dot {C},\varphi \dot {C}\right).
\end{split}
\end{equation}
Since \eqref{2'} and \eqref{4'} are valid, 
it follows that
\begin{equation}\label{3.7}
g\left(\nabla _{\dot {C}}\dot {C},\dot {C}\right)=0, \qquad 
g\left(\nabla _{\dot {C}}\dot {C},\xi \right)=-g\left(\dot {C},\nabla _{\dot {C}}\xi \right).
\end{equation}
Using the following expressions
\[
\begin{split}
F(\dot {C},\dot {C},\dot {C})=g\left(\nabla _{\dot {C}}\varphi \dot {C},\dot {C}\right)-g\left(\varphi\nabla _{\dot {C}}\dot {C},\dot {C}\right),\\[4pt]
\dot {C}\left(b\right)=g\left(\nabla _{\dot {C}}\dot {C},\varphi \dot {C}\right)+g\left(\dot {C},\nabla _{\dot {C}}\varphi \dot {C}\right),
\end{split}
\]
we obtain
\begin{equation}\label{3.8}
g\left(\nabla _{\dot {C}}\dot {C},\varphi \dot {C}\right)=\frac{1}{2}\left[\dot {C}\left(b\right)-F(\dot {C},\dot {C},\dot {C})\right].
\end{equation}
Substituting \eqref{3.7} and \eqref{3.8} in \eqref{3.6}, we get \eqref{3.4}.
\end{proof}

\begin{cor}\label{Corollary}
Let the assumptions of \propref{Proposition 3.1} are satisfied and $(M,\varphi,\allowbreak{}\xi,\allowbreak{}\eta,g)$ be in ${\F}_1$.
Then we have
\begin{equation}\label{3.10}
\begin{split}
h=\frac{\nu }{2}\left[\dot {C}\left(b\right)+a^2\theta (\dot {C})-b\theta (\varphi \dot {C})\right], \\
k_1=\frac{\gamma }{2}\left[\dot {C}\left(b\right)+a^2\theta (\dot {C})-b\theta (\varphi \dot {C})\right].
\end{split}
\end{equation}
\end{cor}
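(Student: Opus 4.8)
The plan is to specialize the general formulas \eqref{3.4} for $h$ and $k_1$ from \propref{Proposition 3.1} to the class $\F_1$. The essential point is that on an $\F_1$-manifold the Reeb vector field is parallel, $\nabla\xi=0$, so the terms $g(\dot C,\nabla_{\dot C}\xi)$ in \eqref{3.4} vanish identically. Both expressions then collapse to $h=\frac{\nu}{2}\bigl[\dot C(b)-F(\dot C,\dot C,\dot C)\bigr]$ and $k_1=\frac{\gamma}{2}\bigl[\dot C(b)-F(\dot C,\dot C,\dot C)\bigr]$, so the whole corollary reduces to rewriting the single scalar $F(\dot C,\dot C,\dot C)$ in terms of the Lee form $\theta$.

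First I would evaluate $F(\dot C,\dot C,\dot C)$ by setting $X=Y=Z=\dot C$ in the defining identity \eqref{2.2} of $\F_1$. The first and third summands coincide, as do the second and fourth, leaving $F(\dot C,\dot C,\dot C)=g(\dot C,\varphi\dot C)\,\theta(\varphi\dot C)+g(\varphi\dot C,\varphi\dot C)\,\theta(\varphi^2\dot C)$. Next I would simplify the coefficients using the null and slant hypotheses: $g(\dot C,\varphi\dot C)=b$ by \eqref{b}; the B-metric compatibility $g(\varphi X,\varphi Y)=-g(X,Y)+\eta(X)\eta(Y)$ together with \eqref{2'} and \eqref{4'} gives $g(\varphi\dot C,\varphi\dot C)=-g(\dot C,\dot C)+\eta(\dot C)^2=a^2$; and the almost-contact relation $\varphi^2X=-X+\eta(X)\xi$ with \eqref{4'} yields $\varphi^2\dot C=-\dot C+a\xi$, hence $\theta(\varphi^2\dot C)=-\theta(\dot C)+a\theta(\xi)$. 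Collecting terms gives $F(\dot C,\dot C,\dot C)=b\,\theta(\varphi\dot C)-a^2\theta(\dot C)+a^3\theta(\xi)$.

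The only remaining structural step is to show that $\theta(\xi)=0$ on $\F_1$-manifolds, which is the point where the class is genuinely used beyond formal algebra. I would argue that $F(X,Y,\xi)=0$ identically: substituting $Z=\xi$ in \eqref{2.2}, every surviving term contains either the factor $\varphi\xi=0$ from \eqref{1'} or the factor $\theta(\varphi^2\xi)$, and $\varphi^2\xi=-\xi+\eta(\xi)\xi=0$ forces $\theta(\varphi^2\xi)=0$. Contracting $F(e_i,e_j,\xi)=0$ with $g^{ij}$ then gives $\theta(\xi)=0$, so the last term drops and $F(\dot C,\dot C,\dot C)=b\,\theta(\varphi\dot C)-a^2\theta(\dot C)$. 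Substituting back, $\dot C(b)-F(\dot C,\dot C,\dot C)=\dot C(b)+a^2\theta(\dot C)-b\,\theta(\varphi\dot C)$, and multiplying by $\frac{\nu}{2}$ and $\frac{\gamma}{2}$ respectively yields exactly \eqref{3.10}. I expect no real obstacle here; the computation is routine once $\theta(\xi)=0$ is in hand, and that identity is the one genuinely new ingredient beyond the algebraic reductions.
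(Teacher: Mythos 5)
Your proposal is correct and follows essentially the same route as the paper: the paper likewise kills the $g(\dot C,\nabla_{\dot C}\xi)$ terms via $\nabla\xi=0$ (established for $\F_1$ in the preliminaries), notes $\theta(\xi)=0$ as a consequence of \eqref{2.2} so that $\theta(\varphi^2\dot C)=-\theta(\dot C)$, and then evaluates $F(\dot C,\dot C,\dot C)=b\,\theta(\varphi\dot C)-a^2\theta(\dot C)$ before substituting into \eqref{3.4}. Your write-up merely makes explicit two steps the paper leaves implicit (the vanishing of the $\nabla\xi$ terms, and the trace argument $F(X,Y,\xi)=0\Rightarrow\theta(\xi)=0$), both of which are accurate.
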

\begin{proof}
By virtue of \eqref{2.2} we get $\theta (\xi )=0$ and hence $\theta (\varphi ^2\dot {C})=-\theta (\dot {C})$. Then
\begin{equation}\label{3.12}
F(\dot {C},\dot {C},\dot {C})=b\theta (\varphi \dot {C})-a^2\theta (\dot {C}).
\end{equation}
Substituting \eqref{3.12} in \eqref{3.4}, we obtain \eqref{3.10}.
\end{proof}

\begin{prop}\label{Proposition 3.2}
A slant null curve $C$ on a 3-dimensional ${\F}_1$-manifold is geodesic if and only if the following equality holds
\[
\dot {C}\left(b\right)=b\theta (\varphi \dot {C})-a^2\theta (\dot {C}).
\]
\end{prop}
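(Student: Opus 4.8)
The plan is to reduce the geodesic condition to the vanishing of the first curvature function $k_1$ and then read the result off from \corref{Corollary}. In the theory of null curves a curve is (pre)geodesic precisely when its acceleration $\nabla_{\dot C}\dot C$ remains tangent to $C$; the parametrization-dependent term $h\dot C$ in the first equation of \eqref{general Frenet eq} can always be removed by passing to a distinguished parameter, so the genuine bending of $C$ away from its tangent line is recorded by $k_1$ alone.

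First I would establish that $\nabla_{\dot C}\dot C$ is proportional to $\dot C$ if and only if $k_1=0$. This follows at once from the first Frenet equation $\nabla_{\dot C}\dot C = h\dot C + k_1 W$ of \eqref{general Frenet eq}: by the normalization \eqref{3'} the vector $W$ satisfies $g(W,W)=1$ and $g(\dot C,W)=0$, so it is spacelike and orthogonal to the null vector $\dot C$, hence linearly independent of $\dot C$. Therefore $h\dot C + k_1 W \in {\rm span}\{\dot C\}$ exactly when $k_1=0$, and this is the geodesic condition.

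Next I would apply \corref{Corollary}, whose hypotheses hold here because $C$ satisfies the assumptions of \propref{Proposition 3.1} and $(M,\varphi,\xi,\eta,g)$ lies in $\F_1$. Its second formula in \eqref{3.10} reads
\[
k_1 = \frac{\gamma}{2}\left[\dot C(b) + a^2\theta(\dot C) - b\theta(\varphi\dot C)\right],
\]
with $\gamma = a/\sqrt{a^4+b^2}$ by \eqref{3.2}. Since $\gamma\neq 0$, the equation $k_1=0$ is equivalent to the vanishing of the bracketed expression, and transposing the two Lee-form terms gives
\[
\dot C(b) = b\theta(\varphi\dot C) - a^2\theta(\dot C),
\]
which is the asserted equality.

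The step deserving care is the appeal to $\gamma\neq 0$. Since $\gamma = a/\sqrt{a^4+b^2}$ vanishes exactly when $a=0$, the biconditional phrased through $k_1$ is transparent only for $a\neq 0$; the Legendre case $a=0$, in which $\varphi\dot C$ becomes null and the orientation of ${\bf F}$ must be re-examined, should be discussed separately. I expect this to be the only real obstacle, as away from it the statement is an immediate consequence of the expression for $k_1$ furnished by \corref{Corollary}.
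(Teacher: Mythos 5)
Your proposal is correct and follows essentially the same route as the paper: the paper's proof likewise cites the fact (from Duggal--Jin) that a null curve is geodesic if and only if $k_1$ vanishes, and then reads the condition off from \eqref{3.10} using $\gamma\neq 0$. Your closing caveat about $a=0$ is well taken but matches the paper's implicit standing assumption that the slant constant is nonzero (already visible in \eqref{3.5}, where $\lambda=(1-\nu b)/a$ divides by $a$, and in Theorems~\ref{Theorem 3.1} and~\ref{Theorem 3.2}), so it does not affect the equivalence as the authors intend it.
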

\begin{proof}
As it is known (\cite{D-J}), a null curve is geodesic if and only $k_1$ vanishes. Taking into account \eqref{3.10} and $\gamma \neq 0$, we establish the truthfulness of the statement.
\end{proof}

Immediately we obtain the following

\begin{cor}\label{cor 3.2}
A slant null curve $C$ on a 3-dimensional ${\F}_0$-manifold is geodesic if and only if $b$ is a constant.
\end{cor}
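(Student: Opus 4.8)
The plan is to obtain this corollary as a direct specialization of the geodesic criterion in \propref{Proposition 3.2} to the subclass $\F_0$. Recall from Section 2 that $\F_0$-manifolds are precisely those $\F_1$-manifolds whose Lee forms vanish, \ie the balanced manifolds of the considered type; in particular the Lee form $\theta$ is identically zero on such a manifold. Consequently both $\theta(\dot{C})$ and $\theta(\varphi \dot{C})$ vanish along any curve $C$.

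First I would invoke \propref{Proposition 3.2}, which asserts that a slant null curve on a 3-dimensional $\F_1$-manifold is geodesic if and only if
\[
\dot{C}(b)=b\theta (\varphi \dot{C})-a^2\theta (\dot{C}).
\]
Substituting $\theta =0$ makes the right-hand side vanish, so on an $\F_0$-manifold the criterion reduces to $\dot{C}(b)=0$. Since $C$ is one-dimensional, $\dot{C}(b)$ is simply the derivative of the smooth function $b=g(\dot{C},\varphi \dot{C})$ along $C$, so $\dot{C}(b)=0$ is equivalent to $b$ being constant along $C$. Combining these observations yields the stated equivalence.

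There is essentially no serious obstacle here, as the corollary is a one-line consequence of \propref{Proposition 3.2}. The only point warranting a moment's care is to confirm that it is exactly the Lee form $\theta$ appearing in the criterion — rather than $\theta^*$ or $\omega$ — and that this form vanishes on $\F_0$-manifolds; both facts follow immediately from the characterization of $\F_0$ as the balanced subclass of $\F_1$.
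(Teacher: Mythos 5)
Your proof is correct and follows exactly the route the paper intends: the corollary is stated as an immediate consequence of \propref{Proposition 3.2}, obtained by setting $\theta=0$ (since $\F_0$ is the subclass of $\F_1$ with vanishing Lee forms), which reduces the geodesic criterion to $\dot{C}(b)=0$, \ie $b$ constant along $C$. Nothing is missing; your extra remark distinguishing $\theta$ from $\theta^*$ and $\omega$ is a harmless precaution.
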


Bearing in mind the statement for the general Frenet frames in \propref{Proposition 3.1}, we have the following
\begin{thm}\label{Theorem 3.1}
Let $C(t)$ be a non-geodesic slant null curve on a 3-dimensional ${\F}_1$-manifold $(M,\varphi,\xi,\eta,g)$. Then there exists a unique Frenet frame
${\bf F}_1=\{\dot {C}, N_1, W_1\}$ for which the original parameter $t$ is distinguished
and
\begin{equation}\label{3.13}
W_1=\alpha \xi-\frac{\alpha }{a}\dot {C}+\gamma \varphi \dot {C} , \qquad
N_1=\frac{1}{a}\xi -\frac{1}{2a^2}\dot {C} ,
\end{equation}
where $\alpha $ and $\gamma $ are given by \eqref{3.2}.
\end{thm}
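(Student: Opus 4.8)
The plan is to recognise that the original parameter $t$ being distinguished means exactly that the function $h$ in the general Frenet equations \eqref{general Frenet eq} vanishes, and then to read off from \corref{Corollary} that $h$ and $k_1$ are given by the \emph{same} bracket $\dot{C}(b)+a^2\theta(\dot{C})-b\theta(\varphi\dot{C})$ multiplied respectively by $\nu/2$ and $\gamma/2$, as in \eqref{3.10}. Since $C$ is non-geodesic we have $k_1\neq0$, so this common bracket is nonzero and, in particular, $\gamma\neq0$, which forces $a\neq0$. Consequently the condition $h=0$ is equivalent to the single scalar equation $\nu=0$.

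The next step is to solve $\nu=0$ for the one free parameter $\beta$. In \propref{Proposition 3.1} every coefficient $\alpha,\gamma,\lambda,\mu,\nu$ of the frame \eqref{3.1} is already expressed as a function of $\beta$, which is the only arbitrary quantity. The formula for $\nu$ in \eqref{3.33} is linear in $\beta$, so $\nu=0$ has, thanks to $a\neq0$, the unique solution $\beta=\frac{b}{a\sqrt{a^4+b^2}}$. This single value determines all the remaining coefficients and hence pins down the frame ${\bf F}_1$, which is precisely the existence-and-uniqueness claim.

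To reach the explicit form \eqref{3.13} I would substitute this $\beta$ back into \eqref{3.1} and \eqref{3.33}. The key shortcut is the identity $\beta=-\frac{\alpha}{a}$, with $\alpha$ as in \eqref{3.2}, which turns $W=\alpha\xi+\beta\dot{C}+\gamma\varphi\dot{C}$ directly into the stated $W_1$. For $N_1$, inserting this $\beta$ into the expressions for $\lambda$ and $\mu$ collapses their numerators to $\frac{a^4+b^2}{a}$ and $\frac{a^4+b^2}{a^2}$ respectively, giving $\lambda=\frac{1}{a}$ and $\mu=-\frac{1}{2a^2}$, while $\nu=0$; this is exactly $N_1=\frac{1}{a}\xi-\frac{1}{2a^2}\dot{C}$.

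The only conceptual step is the first one: noticing that $h$ and $k_1$ carry the same bracket, which simultaneously uses the non-geodesic hypothesis to guarantee that the bracket is nonzero and reduces $h=0$ to the linear equation $\nu=0$, thereby making both existence and uniqueness transparent. After that the argument is a direct algebraic verification, so I do not anticipate any real obstacle.
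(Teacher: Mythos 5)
Your proposal is correct and takes essentially the same route as the paper's proof: both reduce the distinguished-parameter condition $h=0$ to $\nu=0$ by observing that $h$ and $k_1$ in \eqref{3.10} share the same bracket (nonzero by the non-geodesic hypothesis), then solve the linear condition $\nu=0$ uniquely for $\beta=-\frac{\alpha}{a}$ and substitute back to obtain \eqref{3.13}. The only cosmetic differences are that you solve for $\beta$ directly from the expression for $\nu$ in \eqref{3.33} rather than from the second equality of \eqref{3.5}, and that you make explicit the step $\gamma\neq 0\Rightarrow a\neq 0$, which the paper leaves implicit.
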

\begin{proof}
The original parameter $t$ of a null curve $C(t)$ is distinguished if $h(t)$ vanishes.
Since $C(t)$ is  non-geodesic, by using \propref{Proposition 3.2} and \corref{cor 3.2}, we obtain for an ${\F}_1$-manifold  that $h=0$ if and only if $\nu =0$.
The second equality of \eqref{3.5} implies that $\nu =0$ if and only if $\beta =-\frac{\alpha }{a}$. Substituting $\beta =-
\frac{\alpha }{a}$ in \eqref{3.33}, we get $\lambda =\frac{1}{a}$ and $\mu =-\frac{1}{2a^2}$. The vector fields
$W_1$ and $N_1$ in \eqref{3.13} are obtained from \eqref{3.1} by $\beta =-\frac{\alpha }{a}$, $\lambda =\frac{1}{a}$, $\mu =-\frac{1}{2a^2}$ and $\nu =0$. Hence the Frenet frame ${\bf F}_1=\{\dot {C}, N_1, W_1\}$ is the unique frame for which the original parameter $t$ is distinguished.
\end{proof}
\begin{thm}\label{Theorem 3.2}
Let $C(t)$
be a slant null curve on a 3-dimensional ${\F}_1$-manifold $(M,\varphi,\xi,\eta,g)$ and $b$ satisfies the following
ordinary differential equation
\begin{equation}\label{3.15}
\dot {C}\left(b\right)=b\theta (\varphi \dot {C})-a^2\theta (\dot {C})+\frac{2}{a}\sqrt{a^4+b^2}.
\end{equation}
Then $(C(t), {\bf F}_1)$ is a non-geodesic Cartan framed slant null curve, where ${\bf F}_1=\{\dot {C}, N_1, W_1\}$ is the unique Frenet frame of $C(t)$ from
\thmref{Theorem 3.1}. Moreover, the torsion function is $\tau =-\frac{1}{2a^2}$.\\
In particular, if $(M,\varphi,\xi,\eta,g)$ is an ${\F}_0$-manifold then $b$ is given by
\begin{equation}\label{3.14}
b={\frac{1}{2}\left[\exp\left({\frac{2(t+u)}{a}}\right)-a^4\exp\left({-\frac{2(t+u)}{a}}\right)\right]},
\end{equation}
where $u$ is an arbitrary real constant.
\end{thm}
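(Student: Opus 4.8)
The plan is to lean on \thmref{Theorem 3.1}, which already supplies the frame ${\bf F}_1$ with $h=0$, so that the original parameter $t$ is distinguished; the whole task then reduces to recognising the differential equation \eqref{3.15} as exactly the normalisation $k_1=1$ required for a Cartan framing, and afterwards reading off the torsion. First I would insert $\gamma=\frac{a}{\sqrt{a^4+b^2}}$ from \eqref{3.2} into the formula $k_1=\frac{\gamma}{2}\bigl[\dot{C}(b)+a^2\theta(\dot{C})-b\theta(\varphi\dot{C})\bigr]$ of \corref{Corollary}. Rearranging \eqref{3.15} gives $\dot{C}(b)+a^2\theta(\dot{C})-b\theta(\varphi\dot{C})=\frac{2}{a}\sqrt{a^4+b^2}$, so that $k_1=\frac{a}{2\sqrt{a^4+b^2}}\cdot\frac{2}{a}\sqrt{a^4+b^2}=1$; in particular $k_1\neq 0$, confirming that $C(t)$ is non-geodesic.

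With $h=0$ and $k_1=1$ the general Frenet equations \eqref{general Frenet eq} collapse to $\nabla_{\dot{C}}\dot{C}=W_1$, $\nabla_{\dot{C}}N_1=k_2 W_1$ and $\nabla_{\dot{C}}W_1=-k_2\dot{C}-N_1$, which is precisely the Cartan form \eqref{Cartan Frenet eq} with $\tau=k_2$. Moreover, since $h=0$ gives $\ddot{C}=\nabla_{\dot{C}}\dot{C}=k_1 W_1$ and hence $g(\ddot{C},\ddot{C})=k_1^2$, imposing $k_1=1$ turns this into $g(\ddot{C},\ddot{C})=1=k_1$, fulfilling the Cartan normalisation recalled in \secref{sec-2}; hence $(C(t),{\bf F}_1)$ is a non-geodesic Cartan framed slant null curve. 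To obtain the torsion I would differentiate $N_1=\frac{1}{a}\xi-\frac{1}{2a^2}\dot{C}$ from \eqref{3.13}: because $a$ is constant and $\nabla\xi=0$ on an $\F_1$-manifold, $\nabla_{\dot{C}}N_1=-\frac{1}{2a^2}\nabla_{\dot{C}}\dot{C}=-\frac{1}{2a^2}W_1$, and comparison with $\nabla_{\dot{C}}N_1=\tau W_1$ yields $\tau=-\frac{1}{2a^2}$.

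For the $\F_0$-case I would use that all Lee forms vanish, so $\theta=0$ and \eqref{3.15} degenerates to the separable equation $\frac{{\rm d}b}{{\rm d}t}=\frac{2}{a}\sqrt{a^4+b^2}$, where $\dot{C}(b)=\frac{{\rm d}b}{{\rm d}t}$ since $b$ is a function of $t$ along $C$. Separating variables and integrating $\int\frac{{\rm d}b}{\sqrt{a^4+b^2}}=\frac{2}{a}\int{\rm d}t$ gives $b+\sqrt{a^4+b^2}=a^2\exp\bigl(\frac{2(t+u)}{a}\bigr)$ once the multiplicative integration constant is relabelled as $a^2\exp\bigl(\frac{2u}{a}\bigr)$. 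Isolating the radical, squaring, and cancelling the $b^2$-terms then solves linearly for $b$ and reproduces the closed form \eqref{3.14}.

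Since every step after the first is a direct substitution or a single quadrature, the only conceptual hurdle is spotting that \eqref{3.15} is nothing other than $k_1=1$; once this identification is in hand the Cartan framing and the value of $\tau$ follow mechanically from $\nabla\xi=0$. The most delicate bookkeeping lies in the final $\F_0$ integration, where the integration constant must be packaged as $a^2\exp\bigl(\frac{2u}{a}\bigr)$ precisely so that, after squaring, the coefficients collapse to the stated $\frac{1}{2}$ and $-\frac{a^4}{2}$ in \eqref{3.14}.
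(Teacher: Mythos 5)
Your proposal is correct and follows essentially the same route as the paper's own proof: substituting $\gamma$ from \eqref{3.2} and the hypothesis \eqref{3.15} into \eqref{3.10} to get $k_1=1$ (hence non-geodesicity), invoking \thmref{Theorem 3.1} for the frame ${\bf F}_1$, obtaining $\tau=-\frac{1}{2a^2}$ by differentiating $N_1=\frac{1}{a}\xi-\frac{1}{2a^2}\dot{C}$ using $\nabla\xi=0$, and integrating the separable equation in the $\F_0$ case. One small bookkeeping slip in the final quadrature: packaging the multiplicative constant as $a^2\exp\left(\frac{2u}{a}\right)$ gives, after squaring, $b=\frac{a^2}{2}\left[\exp\left(\frac{2(t+u)}{a}\right)-\exp\left(-\frac{2(t+u)}{a}\right)\right]$ with coefficients $\frac{a^2}{2}$ and $-\frac{a^2}{2}$ rather than the stated $\frac{1}{2}$ and $-\frac{a^4}{2}$; to land exactly on \eqref{3.14} the constant should be $\exp\left(\frac{2u}{a}\right)$, as in the paper's integration, although since $u$ is an arbitrary constant the two solution families coincide after shifting $u$ by $a\ln|a|$, so the result is unaffected.
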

\begin{proof}
Taking into account \propref{Proposition 3.2} and \eqref{3.15}, we conclude that  $C(t)$ is non-geodesic. Then it follows from \thmref{Theorem 3.1} that there exists
a unique Frenet frame ${\bf F}_1=\{\dot {C}, N_1, W_1\}$ for which the original parameter $t$ is distinguished.
Substituting the expression for $\gamma $ from \eqref{3.2} and \eqref{3.15} in \eqref{3.10}, we obtain $k_1(t)=1$. Hence  ${\bf F}_1$ is a Cartan Frenet frame with respect to $t$.
The Cartan Frenet equations \eqref{Cartan Frenet eq} with respect to ${\bf F}_1$ imply $\tau =g(\nabla _{\dot {C}}N_1,W_1)$. Using \eqref{3.13} and \eqref{Cartan Frenet eq}, we obtain the equality
$\nabla _{\dot {C}}N_1=-\frac{1}{2a^2}W_1$. Hence $\tau =-\frac{1}{2a^2}$ holds.

In the particular case, when $(M,\varphi,\xi,\eta,g)$ is an ${\F}_0$-manifold, $b$ satisfies the following ordinary differential equation
\begin{equation*}\label{3.16}
\frac{a\,{\rm d}b}{2\sqrt{a^4+b^2}}={\rm d}t ,
\end{equation*}
which is obtained from \eqref{3.15} by $\theta =0$.
Integrating the latter equation, we get
\[
\frac{a}{2}{\rm ln}\left(b+\sqrt{a^4+b^2}\right)=t+u, \qquad u\in {\R}.
\]
The last equality implies \eqref{3.14}.
\end{proof}

\section{Examples of non-geodesic Cartan framed slant null curves with respect to the original parameter on 3-dimensional ${\F}_0$- and ${\F}_1$-manifolds}\label{sec-4}

\subsection{A Minkowski space equipped with a cosymplectic B-metric structure}

An almost contact structure $(\varphi,\xi,\eta )$ and a B-metric $g$ on ${\R}^{2n+1}$ are defined in \cite[Example 1., p. 270]{GaMGri} and it is shown that
$({\R}^{2n+1},\varphi,\xi,\eta,g)$ is an ${\F}_0$-manifold. Now, we consider such an ${\F}_0$-manifold in dimension 3, where the structure $(\varphi,\xi,\eta,g)$ is defined by
\[
\xi =\frac{\partial}{\partial x_3}, \, \, \, \eta ={\rm d}x_3, \, \, \, \varphi \left(\frac{\partial}{\partial x_1}\right)=\frac{\partial}{\partial x_2}, \, \, \,
\varphi \left(\frac{\partial}{\partial x_2}\right)=-\frac{\partial}{\partial x_1}, \, \, \,
\varphi \left(\frac{\partial}{\partial x_3}\right)=0,
\]
\[
g(x,x)=-x_1^2+x_2^2+x_3^2
\]
for $x={x_1\frac{\partial}{\partial x_1}+x_2\frac{\partial}{\partial x_2}+x_3\frac{\partial}{\partial x_3}}$. It is clear that $({\R}^3,\varphi,\xi,\eta,g)$
is a Minkowski space, which is equipped with an almost contact B-metric structure.

Let $C_1(t)=(x_1(t), x_2(t), x_3(t))$, $t\in I$, be a slant null curve on $({\R}^3,\varphi,\xi,\eta,g)$. Then the conditions \eqref{2'} and \eqref{4'}
imply
\begin{equation}\label{4.0}
-\dot {x}_1^2+\dot {x}_2^2+\dot {x}_3^2=0
\end{equation}
and
\begin{equation}\label{4.111}
\dot {x}_3=a ,
\end{equation}
respectively.
Taking into account that $\varphi \dot {C}_1=(-\dot {x}_2, \dot {x}_1, 0)$, we get
\[
b=g(\dot {C}_1,\varphi \dot {C}_1)=2\dot {x}_1\dot {x}_2 .
\]
According to \thmref{Theorem 3.2}, $({C}_1(t), {\bf F}_1)$ is a non-geodesic Cartan framed slant null curve on $({\R}^3,\varphi,\xi,\eta,g)$ if the
condition \eqref{3.14} holds, \ie
\begin{equation}\label{4.2}
2\dot {x}_1\dot {x}_2={\frac{1}{2}\left[\exp\left(\frac{2(t+u)}{a}\right)-a^4\exp\left(-\frac{2(t+u)}{a}\right)\right]}.
\end{equation}
From \eqref{4.0} we obtain ${\dot {x}_1=\pm\sqrt{\dot {x}_2^2+a^2}}$ which implies $\dot {x}_1\neq 0$ for all $t\in I$. If we take ${\dot {x}_1=\sqrt{\dot {x}_2^2+a^2}}$, then
\eqref{4.2} becomes
\begin{equation*}\label{4.3}
{2\dot {x}_2\sqrt{\dot {x}_2^2+a^2}=\frac{1}{2}\left[\exp\left(\frac{2(t+u)}{a}\right)-a^4\exp\left(-\frac{2(t+u)}{a}\right)\right]}.
\end{equation*}
We solve the latter equation and obtain
\begin{equation}\label{4.4}
{\dot {x}_2=\frac{1}{2}\left[\exp\left(\frac{t+u}{a}\right)-a^2\exp\left(-\frac{t+u}{a}\right)\right]}.
\end{equation}
Substituting \eqref{4.4} in \eqref{4.2}, we find
\begin{equation}\label{4.5}
{\dot {x}_1=\frac{1}{2}\left[\exp\left(\frac{t+u}{a}\right)+a^2\exp\left(-\frac{t+u}{a}\right)\right]}.
\end{equation}
Integrating \eqref{4.111}, \eqref{4.4} and \eqref{4.5}, we get
\begin{equation*}
\begin{split}
C_1(t)&=\left(\frac{a}{2}\left[\exp\left(\frac{t+u}{a}\right)-a^2\exp\left(-\frac{t+u}{a}\right)\right]+c_1, \right. \\
&\phantom{=\left(\right.\,}
\left.\frac{a}{2}\left[\exp\left(\frac{t+u}{a}\right)+a^2\exp\left(-\frac{t+u}{a}\right)\right]+c_2, \, \, \, at+c_3\right),
\end{split}
\end{equation*}
where $c_1, \,  c_2, \, c_3 \in {\R}$.

Analogously, in the case ${\dot {x}_1=-\sqrt{\dot {x}_2^2+a^2}}$ we have
\begin{equation*}
\begin{split}
C_2(t)&=\left(-\frac{a}{2}\left[\exp\left(\frac{t+u}{a}\right)-a^2\exp\left(-\frac{t+u}{a}\right)\right]+c_4, \right. \\
&\phantom{=\left(\right.\,}
\left.-\frac{a}{2}\left[\exp\left(\frac{t+u}{a}\right)+a^2\exp\left(-\frac{t+u}{a}\right)\right]+c_5, \, \, \, at+c_6\right),
\end{split}
\end{equation*}
where $c_4, \,  c_5, \, c_6 \in {\R}$.

Thus, we state
\begin{thm}\label{4.1}
The unique non-geodesic Cartan framed slant null curves with respect to the original parameter $t$ on $({\R}^3,\varphi,\xi,\eta,g)$
are  $(C_i(t), {\bf F}_i)$, where ${\bf F}_i=\allowbreak{}\{\dot {C}_i, \allowbreak{}N_i, \allowbreak{}W_i\}$ \, $(i=1,2)$ and
\begin{equation*}\label{4.6}
\begin{split}
N_i&=\frac{1}{a}\xi -\frac{1}{2a^2}\dot {C}_i, \\
W_i&=-\rho
\left(\xi -\frac{1}{a}\dot {C}_i-
{\left.\frac{2a} {\exp\left(\frac{2(t+u)}{a}\right)-a^4\exp\left(-\frac{2(t+u)}{a}\right)}\varphi \dot {C}_i\right)},\right.
\end{split}
\end{equation*}
where
\[
\rho=\frac{\exp\left(\frac{2(t+u)}{a}\right)
-a^4\exp\left(-\frac{2(t+u)}{a}\right)}
{\exp\left(\frac{2(t+u)}{a}\right)+a^4\exp\left(-\frac{2(t+u)}{a}\right)}.
\]
\end{thm}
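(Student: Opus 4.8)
The plan is to read off the frame $\mathbf{F}_i$ directly from the general distinguished-parameter frame of \thmref{Theorem 3.1}, specialised to the function $b$ produced by the Cartan condition. The two curves $C_1(t)$ and $C_2(t)$ have already been obtained above by integration: the Cartan requirement forces $b$ to satisfy \eqref{3.14} (by \thmref{Theorem 3.2} for an $\F_0$-manifold), while the null and slant conditions \eqref{4.0} and \eqref{4.111} give $\dot{x}_1 = \pm\sqrt{\dot{x}_2^2 + a^2}$; the two signs yield exactly the two families, the constants $c_j$ contributing only translates. Since \thmref{Theorem 3.1} asserts that for each such non-geodesic curve the distinguished-parameter Frenet frame $\mathbf{F}_1$ is unique, these are the only non-geodesic Cartan framed slant null curves on $(\R^3,\varphi,\xi,\eta,g)$, which settles the uniqueness claim.

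It remains to identify $N_i$ and $W_i$ explicitly. The formula for $N_i$ is immediate, being the expression for $N_1$ in \eqref{3.13} evaluated along $C_i$. For $W_i$ I start from $W_1 = \alpha\xi - \frac{\alpha}{a}\dot{C}_i + \gamma\varphi\dot{C}_i$ with $\alpha,\gamma$ as in \eqref{3.2}, so the whole task reduces to computing $\sqrt{a^4+b^2}$ for $b$ given by \eqref{3.14}. I expect this to be the one genuine step: writing $E = \exp\!\big(2(t+u)/a\big)$, a short expansion cancels the cross term and yields the perfect square
\[
a^4 + b^2 = \tfrac{1}{4}\big(E + a^4 E^{-1}\big)^2,
\]
so that $\sqrt{a^4+b^2} = \tfrac{1}{2}(E + a^4 E^{-1})$ is rational in $E$. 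Recognising this square is what makes the final expressions collapse.

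Substituting, I obtain $\alpha = -(E - a^4 E^{-1})/(E + a^4 E^{-1}) = -\rho$ and $\gamma = 2a/(E + a^4 E^{-1})$, whence $\gamma/\rho = 2a/(E - a^4 E^{-1})$. Factoring $-\rho$ out of $W_1 = \alpha\xi - \frac{\alpha}{a}\dot{C}_i + \gamma\varphi\dot{C}_i$ then gives precisely
\[
W_i = -\rho\Big(\xi - \tfrac{1}{a}\dot{C}_i - \tfrac{2a}{E - a^4 E^{-1}}\,\varphi\dot{C}_i\Big),
\]
which is the stated form once $E$ is written out. This completes the identification of the frames $\mathbf{F}_i$ and hence the theorem.
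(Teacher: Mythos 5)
Your proposal is correct and follows essentially the same route as the paper: the curves are the ones obtained by integrating the Cartan condition \eqref{3.14} together with \eqref{4.0} and \eqref{4.111} (the two sign choices giving $C_1$ and $C_2$), and the frames come from specialising \eqref{3.13} and \eqref{3.2}, where the perfect square $a^4+b^2=\tfrac{1}{4}\bigl(E+a^4E^{-1}\bigr)^2$ with $E=\exp\bigl(2(t+u)/a\bigr)$ yields $\alpha=-\rho$ and $\gamma=2a/(E+a^4E^{-1})$, exactly as the stated $W_i$ requires. The paper leaves this final frame computation implicit, but your expansion reproduces it faithfully, including the factorisation of $-\rho$ and the quotient $\gamma/\rho=2a/(E-a^4E^{-1})$.
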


\subsection{A non-geodesic Cartan framed slant null curve on an $\F_1$-manifold constructed on a Lie group}

Let $L$ be a 3-dimensional real connected Lie group and $\mathfrak {l}$ its corresponding Lie algebra. If $\{E_0, E_1, E_2\}$ is a basis of left invariant
vector fields of $\mathfrak {l}$ then $L$ is equipped with an almost contact structure $(\varphi, \xi, \eta )$ and a left invariant B-metric $g$ in \cite{HM}
as follows:
\begin{equation}\label{4.7}
\begin{array}{llll}
\varphi E_0=0, \quad \varphi E_1=E_2, \quad \varphi E_2=-E_1, \quad \xi =E_0, \\
\eta (E_0)=1, \quad \eta (E_1)=\eta (E_2)=0, \\
g(E_0,E_0)=g(E_1,E_1)=-g(E_2,E_2)=1, \\
g(E_0,E_1)=g(E_0,E_2)=g(E_1,E_2)=0.
\end{array}
\end{equation}
Let $(L,\varphi,\xi,\eta,g)$ be a 3-dimensional almost contact B-metric manifold belonging to the class ${\F}_1$.
It is proved in \cite[Theorem 1.1]{HM} that the corresponding Lie algebra $\mathfrak {g}_1$ of $L$ is determined by the following commutators:
\begin{equation*}
[E_0,E_1]=[E_0,E_2]=0 \qquad [E_1,E_2]=\alpha E_1+\beta E_2,
\end{equation*}
where $\alpha, \, \beta $ are arbitrary real parameters such that $(\alpha, \beta )\neq (0, 0)$ and
\begin{equation*}\label{4.8}
\alpha =\frac{1}{2}\theta _1, \qquad \beta =\frac{1}{2}\theta _2.
\end{equation*}
In the latter equalities, by $\theta _1$ and $\theta _2$ are denoted $\theta (E_1)$ and $\theta (E_2)$, respectively. We note that $\theta _0=\theta (\xi )=0$.

Let $G$ be the compact simply connected Lie group with the same Lie algebra as $L$ as well as $G$ is isomorphic to $L$. Consider the curve $C(t)=\exp(tX)$ on $G$, where
$t\in {\R}$ and $X\in \mathfrak {g}_1$. Hence the tangent vector to $C(t)$ at the identity element $e$ of $G$ is $\dot {C}(0)=X$. Let $X$ satisfies the following
conditions
\begin{equation}\label{4.9}
g(X,X)=0, \qquad \eta (X)=a, \, \, a\in {\R}\setminus \{0\}.
\end{equation}
Since $g$ is left invariant, from \eqref{4.9} it follows that $g(\dot {C}(t),\dot {C}(t))=0$ and  $\eta (\dot {C}(t))=a$ for all $t\in {\R}$. This means that $C(t)$ is a slant null curve on $G$. Also we have $b=g(\dot {C}(t),\varphi \dot {C}(t))=g(X,\varphi X)$ for all $t\in {\R}$ and therefore $b$ is a constant. Taking into account
\eqref{4.9}, the coordinates $(p,q,r)$ of $X$ with respect to the basis $\{E_0, E_1, E_2\}$ satisfy the following equalities
\begin{equation}\label{4.10}
p^2+q^2-r^2=0, \qquad p=a.
\end{equation}
From \eqref{4.10} we obtain $r=\pm \sqrt{a^2+q^2}$. Let us suppose that $r=\sqrt{a^2+q^2}$. Having in mind \eqref{4.7}, we get
$\varphi X=(0,-\sqrt{a^2+q^2},q)$. Then we have
\begin{equation}\label{4.11}
b=g(X,\varphi X)=-2q\sqrt{a^2+q^2}.
\end{equation}
Using \eqref{4.8}, we obtain
\begin{equation}\label{4.12}
\theta (\varphi X)=2\beta q-2\alpha \sqrt{a^2+q^2}, \quad \theta (X)=2\alpha q+2\beta \sqrt{a^2+q^2}.
\end{equation}
Now, according to \thmref{Theorem 3.2}, $(C(t), {\bf F}_1)$ is a non-geodesic Cartan framed slant null curve on $G$ if  \eqref{3.15} holds. By using \eqref{4.11}
and \eqref{4.12}, the equation \eqref{3.15} becomes
\begin{equation*}\label{4.14}
(a^2+2q^2)\left(a\beta \sqrt{a^2+q^2}-\alpha aq-1\right)=0,
\end{equation*}
which is equivalent to
\begin{equation}\label{4.15}
a\beta \sqrt{a^2+q^2}-\alpha aq-1=0
\end{equation}
since $a^2+2q^2\neq 0$.

Consider an ${\F}_1$-manifold $(L_1,\varphi,\xi,\eta,g)$ such that $\alpha =\beta \neq 0$ and $b\neq 0$. For this manifold  \eqref{4.15} becomes
\begin{equation}\label{4.16}
\alpha a\sqrt{a^2+q^2}=1+\alpha aq.
\end{equation}
If $\alpha a>0$ then \eqref{4.16} has a unique solution $q=\frac{\alpha ^2a^4-1}{2\alpha a}$. Then we get
$r=\frac{\alpha ^2a^4+1}{2\alpha a}$. The condition $b\neq 0$ implies $q\neq 0$ which means that
$ \alpha \neq \pm \frac{1}{a^2}$.

Thus, for ${\bf F}_1=\{X, N_1, W_1\}\in T_eG$ and  $\varphi X$ we obtain
\begin{equation}\label{4.17}
X=\left(a,\, \frac{\alpha ^2a^4-1}{2\alpha a},\, \frac{\alpha ^2a^4+1}{2\alpha a}\right),
\end{equation}
\begin{equation}\label{4.177}
\begin{split}
N_1&=\frac{1}{a}\xi -\frac{1}{2a^2}X, \\
W_1&=\frac{\alpha ^4a^8-1}{\alpha ^4a^8+1}\left(\xi -\frac{1}{a}X+\frac{2\alpha ^2a^3}{\alpha ^4a^8-1}\varphi X\right),
\end{split}
\end{equation}
\begin{equation}\label{4.1777}
\varphi X=\left(0, -\frac{\alpha ^2a^4+1}{2\alpha a}, \frac{\alpha ^2a^4-1}{2\alpha a}\right),
\end{equation}
where $\alpha a>0$ and $ \alpha \neq \pm \frac{1}{a^2}$.

Further, we find the matrix representation of $C(t)$ and ${\bf F}_1$. In \cite[Theorem 2.1]{HM} it is found explicitly the 3-dimensional matrix representation $\pi $
of the Lie algebra $\mathfrak {g}_1$. It is well known that $\pi $ is the following Lie algebra homomorphism \\
$\pi : \mathfrak {g}_1\longrightarrow \Hom(V)$ such that $Y\longrightarrow \pi (Y)$, where $V$ is a 3-dimensional real vector space. Notice that the linear operators $\pi (Y)\in \Hom(V)$ do not need to be invertible. Their matrices $A$ are called briefly matrix representation of
$\mathfrak {g}_1$. It is proved that
\[
\pi (E_0)=\left(\begin{array}{lll}
0 & 0 & 0 \cr
0 & 0 & 0 \cr
0 & 0 & 0
\end{array}\right), \quad
\pi (E_1)=\left(\begin{array}{lcc}
0 & 0 & 0 \cr
0 & 0 & 0 \cr
0 & -\alpha & -\beta
\end{array}\right),
\]
\[
\pi (E_2)=\left(\begin{array}{lll}
0 & 0 & 0 \cr
0 & \alpha  & \beta \cr
0 & 0 & 0
\end{array}\right).
\]
Now, taking into account that $\alpha =\beta $ and using \eqref{4.17}, \eqref{4.1777}, we have
\begin{equation}\label{4.18}
\pi (X)=
\left(\begin{array}{lcc}
0 & 0 & 0 \cr \cr
0 & \frac{\alpha ^2a^4+1}{2a} & \frac{\alpha ^2a^4+1}{2a} \cr \cr
0 & \frac{1-\alpha ^2a^4}{2a} & \frac{1-\alpha ^2a^4}{2a}
\end{array}\right),
\end{equation}
\begin{equation}\label{4.188}
\pi (\varphi X)=
\left(\begin{array}{lcc}
0 & 0 & 0 \cr \cr
0 & \frac{\alpha ^2a^4-1}{2a} & \frac{\alpha ^2a^4-1}{2a} \cr \cr
0 & \frac{\alpha ^2a^4+1}{2a} & \frac{\alpha ^2a^4+1}{2a}
\end{array}\right).
\end{equation}
Let $\Pi: G\longrightarrow \End(V)$ be the matrix representation of the simply connected Lie group $G$. By $\End(V)$ are denoted the invertible linear operators of $V$. The representations $\Pi $ and $\pi $ are related as follows:
\[
\Pi (\exp(Y))=\exp(\pi (Y))
\]
for  all $Y\in \mathfrak {g}_1$.
Since $tX\in \mathfrak {g}_1$, $t\in {\R}$, we get
\begin{equation}\label{4.19}
\Pi (C(t))=\Pi (\exp(tX))=\exp(\pi (tX)).
\end{equation}
In \cite{HM}, the group of the matrices of the endomorphisms $\Pi (b), \, b\in G$, is denoted by $G_1$ and it is called \emph{the matrix Lie group representation}.
According to \cite[Theorem 2.1]{HM}, we have
\begin{equation}\label{4.20}
G_1=\left\{\exp(A)=E+\left(\frac{\exp(\tr A)-1}{\tr A}\right)A\right\},
\end{equation}
where $\tr A\neq 0$. Using \eqref{4.18}, we find $\tr\left( \pi (tX)\right)=\frac{t}{a}$ and taking into account \eqref{4.20}, we get
\begin{equation}\label{4.21}
\exp(\pi (tX))=E+a\left(\exp\left(\frac{t}{a}\right)-1\right)\pi (X).
\end{equation}
Substituting \eqref{4.21} in \eqref{4.19} and taking into account \eqref{4.18}, we obtain the following matrix representation  of $C(t)$
\begin{equation*}
\Pi (C(t))=
\left(\begin{array}{lcc}
1 & 0 & 0 \cr \cr
0 & 1+\exp\left(\frac{t}{a}-1\right)\frac{\alpha ^2a^4+1}{2} & \exp\left(\frac{t}{a}-1\right)\frac{\alpha ^2a^4+1}{2} \cr \cr
0 & \exp\left(\frac{t}{a}-1\right)\frac{1-\alpha ^2a^4}{2} &
1+\exp\left(\frac{t}{a}-1\right)\frac{1-\alpha ^2a^4}{2}
\end{array}\right).
\end{equation*}
Finally, using \eqref{4.177}, \eqref{4.18} and \eqref{4.188}, we obtain the  matrix representations of $N_1$ and $W_1$ as follows:
\begin{equation*}
\pi (N_1)=
\left(\begin{array}{lcc}
0 & 0 & 0 \cr \cr
0 & -\frac{\alpha ^2a^4+1}{4a^3} & -\frac{\alpha ^2a^4+1}{4a^3} \cr \cr
0 & \frac{\alpha ^2a^4-1}{4a^3} & \frac{\alpha ^2a^4-1}{4a^3}
\end{array}\right),
\end{equation*}
\begin{equation*}
\pi (W_1)=
\left(\begin{array}{lcc}
0 & 0 & 0 \cr \cr
0 & \frac{1-\alpha ^2a^4}{2a^2} & \frac{1-\alpha ^2a^4}{2a^2} \cr \cr
0 & \frac{1+\alpha ^2a^4}{2a^2} & \frac{1+\alpha ^2a^4}{2a^2}
\end{array}\right).
\end{equation*}


\end{document}